\documentclass[reqno]{amsart}
\usepackage{amssymb,amsthm}

\newcommand{\rsp}{\raisebox{0em}[2.3ex][1.3ex]{\rule{0em}{2ex} }}

\newcommand{\fp}{{\mathfrak p}}

\newcommand{\Z}{{\mathbb Z}}

\newcommand{\Q}{{\mathbb Q}}

\newcommand{\fa}{\mathfrak a}

\newcommand{\cR}{{\mathcal R}}

\newcommand{\Cl}{{\operatorname{Cl}}}

\newcommand{\lra}{\longrightarrow}

\newcommand{\Sim}{\v{S}imerka}

\newcommand{\cs}{\operatorname{\check{s}}}

\newtheorem{thm}{Theorem}[section]
\newtheorem{prop}[thm]{Proposition}
\newtheorem{lem}[thm]{Lemma}

\numberwithin{equation}{section}

\title{V\'aclav \v{S}imerka: \\ Quadratic Forms and Factorization}
\author{F. Lemmermeyer}
\email{hb3@ix.urz.uni-heidelberg.de}
\address{M\"orikeweg 1, 73489 Jagstzell, Germany}

\begin{document}

\begin{abstract}
In this article we show that the Czech mathematician V\'aclav
\Sim\ discovered the factoriation of $\frac19 (10^{17}-1)$
using a method based on the class group of binary quadratic 
forms more than 120 years before Shanks and Schnorr developed
similar algorithms. \Sim\ also gave the first examples of what
later became known as Carmichael numbers.
\end{abstract}

\maketitle

\medskip \noindent 
According to Dickson \cite[I. p. 172]{Dick}, the number 
$$ N = 11111111111111111 = \frac{10^{17}-1}9 $$ 
was first factored by Le Lasseur in 1886, and the result was 
published by Lucas in the same year. Actually the factorization 
of $N$ already appeared as a side result in a forgotten memoir 
\cite{Sim1} of V\'aclav\footnote{In his German publications, 
\Sim\ used the germanized name Wenzel instead of V\'aclav.} 
\Sim, in which he presented his ideas on composition of positive 
definite forms, computation of class numbers, and the prime 
factorization of large integers such as $N$.

In fact, consider the binary quadratic form 
$$ Q = (2, 1, 1388888888888889)  $$
with discriminant $\Delta = -N$. If we knew that 
$h = 107019310$ was (a multiple of) the order of $[Q]$ in $\Cl(-N)$, 
then a simple calculation would reveal that
$$ Q^{h/2} \sim (2071723, 2071723, 1341323520), $$
from which we could read off the factorization
$$ N = 2071723 \cdot 5363222357. $$
This idea for factoring integers was later rediscovered
by Daniel Shanks in the 1970s; subsequent work on this idea 
led Shanks to introduce the notion of infrastructure, which 
has played a major role in algorithmic number theory since then.

In \cite{Sim1}, \Sim\ explains Gauss's theory of composition
using the language from Legendre's Th\'eorie des Nombres. The 
rest of his article \cite{Sim1} is dedicated to the calculation 
of the order of a quadratic form in the class group, and an 
application to factoring integers. 

In this article we will review \Sim's work and explain some of his
calculations so that the readers may convince themselves that 
\cite{Sim1} contains profound ideas and important results. 

\section{A Short Biography}

V\'aclav \Sim\ was born on Dec. 20, 1819, in Hochwesseln
(Vysok\'em Vesel\'i). He studied philosphy and theology in 
K\"oniggr\"atz, was ordained in 1845 and worked as a chaplain 
in \v{Z}lunice near Ji\v{c}\'in. He started studying mathematics
and physics in 1852 and became a teacher at the gymnasium of Budweis.
He did not get a permanent appointment there, and in 1862 became
priest in Jen\v{s}ovice near Vusok\'e M\'yto. Today, \Sim\ is
remembered for his textbook on algebra (1863); its appendix contained 
an introduction to calculus and is the first Czech textbook on calculus.
\Sim\ died in Praska\v{c}ka near K\"oniggr\"atz (Praska\v{c}ce u Hradce 
Kr\'alov\'e) on Dec. 26, 1887.

\Sim's contributions to the theory of factoring have not been
noticed at all, and his name does not occur in any history of 
number theory except Dickson's: see \cite[II, p. 196]{Dick} for
a reference to \Sim's article \cite{Sim3}, which deals with the 
diophantine problem of rational triangles. In \cite[III, p. 67]{Dick}, 
Dickson even refers to \cite{Sim1} in connection with the composition
of binary quadratic forms. 

In \cite{Sim2}, \Sim\ gave a detailed presentation of a large
part of Legendre's work on sums of three squares. In \cite{SimFN}, 
\Sim\ proved that 
$7 \cdot 2^{14} + 1 \mid F_{12}$ and $5 \cdot 2^{25} + 1 \mid F_{23}$
(these factors had just been obtained by Pervouchin), 
where $F_n$ denotes the $n$-th Fermat number. 
In \cite{SimF}, \Sim\ listed the Carmichael numbers \cite{CN}
$$ n = 561, 1105, 1729, 2465, 2821, 6601, 8911 $$
long before Korselt \cite{Kors} gave criteria hinting at their
existence and Carmichael \cite{Carm} gave what was believed to be
the first example. All of \Sim's examples are products of three
prime factors, and there are no others below $10\,000$.

For more on \Sim, see \cite{Cupr,Kop,Panek}.

\section{The \v{S}imerka Map}

Let us now present \Sim's ideas from \cite{Sim1} in a modern form. At
the end of this section, we will explain \Sim's language. Let $Q$ be 
a positive definite binary quadratic form with discriminant $\Delta$. 
If $Q$ primitively represents a (necessarily positive) integer $a$, 
then $Q$ is equivalent to a unique form $(a,B,C)$ with $-a < B \le a$. 
Let 
$$ a = p_1^{a_1} \cdots p_r^{a_r} $$
denote the prime factorization of $a$. For each prime $p_j \mid a$,
fix an integer $-p_j < b_j \le p_j$ with $B \equiv b_j \bmod p_j$ and set
$$ s_j = \begin{cases}
             +1 & \text{ if } b_j \ge 0, \\
             -1 & \text{ if } b_j < 0. \end{cases} $$
Thus if $a = Q(x,y)$, then we can define
$$ \cs(Q,a) =  \prod p_j^{s_ja_j}. $$

\medskip\noindent
{\bf Example.} 
The principal form $Q_0 = (1,0,5)$ with discriminant $-20$ represents the
following values:
$$ \begin{array}{c|ccccccc}
  \rsp  a      &    1    &   5     &    6      &    9    
               &    14   &   21  &  21 \\ \hline
  \rsp  Q      & (1,0,5) & (5,0,1) & (6,2,1)   & (9,4,1) 
               & (14,6,1) & (21,8,1) & (21,20,5) \\
  \rsp \cs(a,Q_0) &    1    &   5     & 2 \cdot 3 &   3^2   
               & 2 \cdot 7  & 3 \cdot 7 & 3^{-1} \cdot 7
   \end{array} $$

Forms equivalent to $Q = (2,2,3)$ give us the following values:
  $$  \begin{array}{c|ccccc}
  \rsp  a      &    2    &   3     &    7   &  87 &  87 \\ \hline
  \rsp  Q      & (2,2,3) & (3,-2,2) &  (7,6,2) &  (87,26,2)  & (87,32,3) \\
  \rsp \cs(a,Q_0) &    2    &   3^{-1}   & 7    &  3 \cdot 29 &  3 \cdot 29^{-1}
   \end{array} $$

The ideal theoretic interpretation of the \Sim\ map is the following:
there is a correspondence between binary quadratic forms $Q$ with 
discriminant $\Delta < 0$ and ideals $\fa(Q)$ in a suitable order of the 
quadratic number field $\Q(\sqrt{\Delta}\,)$. Equivalent forms correspond
to equivalent ideals, and integers $a$ represented by $Q$, say $Q(x,y) = a$,
correspond to norms of elements $\alpha \fa(Q)$ via $a = N\alpha/N\fa(Q)$.
Integers represented primitively by $Q$ are characterized by the fact
that $\alpha \in \fa(Q)$ is not divisible by a rational prime number. 
If we fix prime ideals $\fp_j = \fa(Q_j)$ by $\fa(Q_j)$ for 
$Q_j = (p_j, B_j, C)$ with $0 \le B_j \le p_j$ and formally set 
$\fp_j^{-1} = \fa(Q_j')$ with $Q_j' = (p_j, -B_j, C)$, then 
$\cs(a,Q) = p_1^{a_1} \cdots p_r^{a_r}$ is equivalent to 
$(\alpha) = \fp_1^{a_1} \cdots \fp_r^{a_r} \fa(Q)$.

Assume that $a = p_1 \cdots p_r$, and that 
$Q = (a,B,C)$. Then 
$$ (a,B,C) = (p_1,B,p_2\cdots p_rC) \cdot
             (p_2,B,p_1p_3 \cdots p_rC) \cdots
             (p_r,B,p_1 \cdots p_{r-1}C). $$
If we write $b_j \equiv B \bmod 2p_j$ with $-p_j < b_j \le p_j$,
then 
$$ \cs(a,Q) = \cs(p_1,Q_1) \cdots \cs(p_r,Q_r) $$ 
by definition of $\cs$.

We start by showing that the value set of $\cs$ is closed with
respect to inversion. To this end we use the notation 
$(A,B,C)^{-1} = (A,-B,C)$. Then it follows right from the definition
of $\cs$ that if $\cs(a,Q) = r$, then $\cs(a,Q^{-1}) = r^{-1}$.

Now we claim

\begin{lem}\label{Sm}
Let $\Delta$ be a fundamental discriminant. 
Assume that $Q_1(x_1,y_1) = a_1$ and  $Q_2(x_2,y_2) = a_2$, and that
$Q_3 \sim Q_1Q_2$. Then there exist integers $a_3, x_3, y_3$ such that 
$Q_3(x_3,y_3) = a_3$ and $\cs(a_3,Q_3) = \cs(a_1,Q_1) \cdot \cs(a_2,Q_2)$.
\end{lem}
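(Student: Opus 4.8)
The plan is to pass to the ideal-theoretic dictionary recalled before the lemma and to reduce the claim to the multiplicativity of a single homomorphism. Because $\Delta$ is fundamental, the maximal order $\cO$ of $\Q(\sqrt{\Delta}\,)$ is the relevant order, every fractional ideal factors uniquely into primes, and the form-ideal correspondence carries no conductor obstruction. For each rational prime $p$ that splits or ramifies I fix once and for all the distinguished prime $\fp = \fa(p,B,C)$ with $0 \le B \le p$, so that its conjugate $\overline{\fp}$ carries the representative $-B$. I then work in the abelian group $\Gamma$ generated by one symbol $p$ for each such prime, free on the split primes and of order two on the ramified ones, and I define $\phi$ on the group of fractional $\cO$-ideals by $\phi(\fp) = p$, $\phi(\overline{\fp}) = p^{-1}$ for split $p$, $\phi(\fp) = p$ for ramified $p$, and $\phi(\fq) = 1$ for inert $\fq$. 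The decisive feature of these choices is that $\phi$ annihilates every rational ideal: $\phi((p)) = \phi(\fp\overline{\fp}) = 1$ for split $p$ and $\phi((p)) = \phi(\fp^2) = p^2 = 1$ for ramified $p$; this last normalisation is exactly what is consistent with the relation $\cs(a,Q^{-1}) = \cs(a,Q)^{-1}$ recorded above.

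First I would make the dictionary quantitative. A primitive representation $Q(x,y) = a$ corresponds to an element $\alpha \in \fa(Q)$ with $(\alpha) = \fa(Q)\,\fb$, where $\fb$ is integral of norm $a$ and primitive, i.e.\ divisible by no rational prime, and the coordinates of $\alpha$ in the standard basis of $\fa(Q)$ return $(x,y)$. Unwinding the definition of $\cs$, the prime of $\fb$ above $p_j \mid a$ is $\fp_j$ or $\overline{\fp_j}$ precisely according as the residue $b_j \in (-p_j, p_j]$ of the middle coefficient is $\ge 0$ or $<0$; hence the sign $s_j$ agrees with the orientation recorded by $\phi$, and $\cs(a,Q) = \phi(\fb)$. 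This identification is the technical core, and it is where the single global choice of distinguished primes matters, since the same convention must govern $Q_1$, $Q_2$ and $Q_3$ at once.

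With the dictionary in place the remainder is formal. As $Q_3 \sim Q_1Q_2$, the corresponding classes multiply in $\Cl(\Delta)$, so $\fa(Q_3) = (\gamma)\,\fa(Q_1)\fa(Q_2)$ for some $\gamma \in \Q(\sqrt{\Delta}\,)^{\times}$. Writing $(\alpha_i) = \fa(Q_i)\,\fb_i$ and setting $\beta = \gamma\,\alpha_1\alpha_2 \in \fa(Q_3)$ gives $(\beta) = \fa(Q_3)\,\fb_1\fb_2$. The ideal $\fb_1\fb_2$ need not be primitive, so I factor $\fb_1\fb_2 = (g)\,\fb_3$ with $g \in \Z_{>0}$ maximal and $\fb_3$ primitive, and put $\alpha_3 = \beta/g$, $a_3 = N\fb_3$, with $(x_3,y_3)$ the coordinates of $\alpha_3$. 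Then $(\alpha_3) = \fa(Q_3)\,\fb_3$ is a primitive representation $Q_3(x_3,y_3) = a_3$, and applying $\phi$ yields
$$ \cs(a_3,Q_3) = \phi(\fb_3) = \phi(\fb_1\fb_2)\,\phi((g))^{-1} = \phi(\fb_1)\,\phi(\fb_2) = \cs(a_1,Q_1)\,\cs(a_2,Q_2), $$
because $\phi$ kills the rational ideal $(g)$.

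The hard part is not this formal computation but the bookkeeping hidden in the passage to $\fb_3$: when a split prime enters as $\fp$ in $\fb_1$ and as $\overline{\fp}$ in $\fb_2$, or a ramified prime enters both, their product deposits a rational ideal $(p)$ in $\fb_1\fb_2$ that must be removed to keep the representation primitive, which is why $a_3$ is genuinely smaller than $a_1a_2$ in general. This is harmless only because $\phi$ was built to kill rational ideals --- equivalently because $p\cdot p^{-1} = 1$ for split $p$ and $p^2 = 1$ for ramified $p$ in $\Gamma$ --- so I expect the one delicate point to be checking, uniformly for all three forms, that the sign convention defining $\cs$ really does coincide with the orientation attached to the fixed distinguished primes $\fp_j$.
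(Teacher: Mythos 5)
Your argument is correct in substance, but it takes a genuinely different route from the paper's. The paper never leaves the language of forms: it factors $(a_1,B_1,C_1)$ into ``prime forms'' sharing the middle coefficient, thereby reducing to the case where $a_1$ and $a_2$ are prime, and then treats the possible configurations by hand with Dirichlet composition --- distinct primes (choose the middle coefficient by CRT modulo $2p$ and $2q$), a prime paired with its inverse form, a repeated split prime (lift $b_1$ to a root of $\Delta$ modulo $2p^2$), and a repeated ramified prime (ambiguous form). Your proof replaces this case analysis by the ideal dictionary: a primitive representation becomes $(\alpha)=\fa(Q)\mathfrak{b}$ with $\mathfrak{b}$ a primitive integral ideal, $\cs(a,Q)=\phi(\mathfrak{b})$, and multiplicativity drops out of unique factorization of ideals once the rational content $(g)$ of $\mathfrak{b}_1\mathfrak{b}_2$ is stripped off, because $\phi$ kills rational ideals. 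What you gain is uniformity: all collisions between the prime divisors of $a_1$ and $a_2$ (including several shared primes, which the paper's pairwise reduction handles only implicitly) are absorbed by the single factor $(g)$. What you pay is reliance on the full dictionary --- in particular that Gauss/Dirichlet composition matches ideal multiplication, and the identification $\cs(a,Q)=\phi(\mathfrak{b})$ itself, which you correctly isolate as the technical core but only assert; the paper states that dictionary in its preamble, yet its proof of the lemma never needs it.

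One substantive caveat: giving the ramified primes order two in $\Gamma$ is not a neutral normalization --- it changes what is being proved, since the lemma asserts an equality of rational numbers. In the ramified case that literal statement is in fact false: for $\Delta=-15$ and $Q_1=Q_2=(3,3,2)$ one has $\cs(3,Q_1)\cs(3,Q_2)=9$ and $Q_1Q_2\sim Q_0=(1,1,4)$, but no form equivalent to $Q_0$ primitively represents an integer with $\cs$-value $9$: such a representation would correspond to a primitive ideal of norm $9$, and the only ideal of norm $9$ is $(3)=\fp^2$ with $\fp$ the ramified prime, which is not primitive. The paper's proof trips over exactly this case and escapes by asserting $\cs(Q_1)=1$ for the ambiguous form, which contradicts the definition of $\cs$ no less than your convention does; your repair (forced, as you note, by compatibility with $\cs(a,Q^{-1})=\cs(a,Q)^{-1}$) is the more coherent one. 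But you should state explicitly that you prove the lemma with values in $\Gamma$, i.e.\ in $\Q^{\times}$ modulo squares of ramified primes, and that away from ramified collisions your computation does yield the stated equality in $\Q^{\times}$.
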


\begin{proof}
Writing
$Q_1 = (a_1,B_1,C_1) = (p_1,B_1,a_1C_1/p_1) \cdots (p_r,B_1,a_1C_1/p_r)$ and 
$Q_2 = (a_2,B_2,C_2) = (q_1,B_2,a_2C_2/q_1) \cdots (q_s,B_2,a_2C_2/q_s)$, 
where $a_1 = p_1 \cdots p_r$ and $a_2 = q_1 \cdots q_s$ are the 
prime factorizations of $a_1$ and $a_2$, we see that it is sufficient 
to prove the result for prime values of $a_1$ and $a_2$. There are 
several cases:
\begin{enumerate}
\item $Q_1 = (p,b_1,c_1)$, $Q_2 = (q,b_2,c_2)$ with $p \ne q$:
      for composing these forms using Dirichlet's method, we choose
      an integer $b$ satisfying the congruences
      $$ b \equiv b_1 \bmod 2p, \quad \text{and} \quad 
         b \equiv b_2 \bmod 2q.  $$  
      Then $Q_1 \sim (p,b,qc')$ and $Q_2 \sim (q,b,pc')$, and we find
      $Q_1Q_2 = (pq,b,c')$ as well as $\cs(pq,Q_1Q_2) = \cs(p,Q_1)\cs(q,Q_2)$
      by the definition of $\cs$.
\item $Q_1 = (p,b_1,c_1)$, $Q_2 = (p,-b_1,c_1) = Q^{-1}$: here Dirichlet
      composition shows $Q_1Q_2 = (1,b_1,pc_1) \sim Q_0$, and since
      $\cs(Q_2) = \cs(Q_1)^{-1}$ we also have  
      $1 = \cs(1,Q_1Q_2) = \cs(p,Q_1)\cs(p,Q_2)$.
\item $Q_1 = (p,b_1,c_1)= Q_2$: if $p \nmid \Delta$, then $p \nmid b_1$,
      and we can easily find an integer $b \equiv b_1 \bmod 2p$ with
      $b^2 \equiv \Delta \bmod 2p_1^2$. But then $Q_1 \sim (p,b,pc')$
      and, by Dirichlet composition, $Q_1^2 = (p^2,b,c')$. As before,
      the definition of $\cs$ immediately shows that 
      $\cs(p^2,Q_1^2) = \cs(p,Q_1)^2$.

      If $p \mid \Delta$ and $p$ is odd, on the other hand, then 
      $p \mid b_1$. Since $\Delta$ is fundamental, the form $Q_1$ is
      ambiguous, hence $Q_1^2 \sim Q_0$. Since $\cs(Q_1) = 1$, the
      multiplicativity is clear.
\end{enumerate}
This completes the proof. 
\end{proof}
  
\begin{prop}
Let $Q_0$ denote the principal form with discriminant $\Delta < 0$. Then
the elements $\cs(a,Q_0)$ form a subgroup $\cR$ of $\Q^\times$.
\end{prop}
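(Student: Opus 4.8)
The plan is to verify the three subgroup axioms for
$$ \cR = \{\, \cs(a,Q_0) : a \text{ is primitively represented by } Q_0 \,\} $$
inside $\Q^\times$. Since each value $\cs(a,Q_0) = \prod p_j^{s_ja_j}$ is a product of integer powers of primes, it is a positive rational number, so $\cR$ is at least a subset of $\Q^\times$; it remains to produce the identity and to establish closure under products and inverses. The identity is immediate: the principal form represents $a=1$ via $Q_0(1,0)=1$, and since $1$ has empty prime factorization we get $\cs(1,Q_0)=1 \in \cR$.

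For closure under multiplication I would let Lemma~\ref{Sm} do all the work. Given $r_1 = \cs(a_1,Q_0)$ and $r_2 = \cs(a_2,Q_0)$, apply the lemma with $Q_1 = Q_2 = Q_0$. Because $[Q_0]$ is the identity of $\Cl(\Delta)$ we have $Q_0 Q_0 \sim Q_0$, so we are free to take $Q_3 = Q_0$. The lemma then hands us a (necessarily primitive) representation $Q_0(x_3,y_3)=a_3$ together with the identity $\cs(a_3,Q_0) = \cs(a_1,Q_0)\cdot \cs(a_2,Q_0) = r_1 r_2$, which exhibits $r_1 r_2$ as an element of $\cR$.

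For closure under inverses I would invoke the inversion property noted just before the lemma: writing $r = \cs(a,Q')$ for the normalized form $Q' = (a,B,C) \sim Q_0$, the opposite form $(Q')^{-1} = (a,-B,C)$ satisfies $\cs(a,(Q')^{-1}) = r^{-1}$. This opposite form still represents $a$ primitively (through $(1,0)$), and since $Q' \sim Q_0$ forces $(Q')^{-1} \sim Q_0^{-1} \sim Q_0$, the principal class being its own inverse, the value $r^{-1} = \cs(a,(Q')^{-1})$ again lies in $\cR$. These three facts together give the proposition.

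I do not anticipate a genuine obstacle here, since the substance is already contained in Lemma~\ref{Sm} and in the inversion property. The single point that demands care is that $\cs(a,Q_0)$ depends on the chosen primitive representation of $a$ — equivalently on the normalized form $(a,B,C)\sim Q_0$ — and not on the pair $(a,[Q_0])$ alone, as the two forms with $a=21$ in the example for $Q_0=(1,0,5)$ illustrate. Each step above must therefore be phrased so that the element it produces genuinely arises from a bona fide primitive representation by a form in the principal class, which is exactly what the lemma and the inversion property supply.
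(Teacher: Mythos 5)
Your proposal is correct and matches the paper's argument in substance: closure under products is delegated to Lemma~\ref{Sm}, and closure under inverses to the inversion property $\cs(a,Q^{-1})=\cs(a,Q)^{-1}$ together with the observation that the principal class is its own inverse, which is exactly how the paper proceeds. The only difference is presentational: you apply the \emph{statement} of Lemma~\ref{Sm} directly with $Q_1=Q_2=Q_3=Q_0$, whereas the paper repeats the reduction to prime values of $a$ and $b$ and then cites the \emph{proof} of the lemma; your version is, if anything, tidier, and your explicit treatment of the identity element and of the fact that $(Q')^{-1}$ stays in the principal class fills in details the paper leaves tacit.
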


\begin{proof}
It remains to show that if $Q$ represents $a$ and $b$, then it
represents $ab$ in such a way that $\cs(ab,Q_0) = \cs(a,Q_0) \cs(b,Q_0)$.
Again we can reduce this to the case of prime values of $a$ and $b$,
and in this case the claim follows from the proof of Lemma \ref{Sm}.
\end{proof}

\begin{prop}
Assume that $a$ is represented properly by $Q$, and that $a'$
is represented properly by $Q'$. If $Q \sim Q'$, then 
$$ \cs(a,Q) \equiv \cs(a',Q') \bmod \cR. $$
\end{prop}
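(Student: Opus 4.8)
The plan is to reduce the claim to showing that the ratio $\cs(a,Q)\,\cs(a',Q')^{-1}$ lies in $\cR$, and then to exhibit this ratio directly as a value $\cs(a_3,Q_0)$ of the \Sim\ map on the principal form. The two tools I would combine are the multiplicativity furnished by Lemma~\ref{Sm} and the inversion rule $\cs(a,Q^{-1}) = \cs(a,Q)^{-1}$ recorded just before it.

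Concretely, I would first replace $Q'$ by its inverse form: writing $Q' = (A,B,C)$, set $(Q')^{-1} = (A,-B,C)$. If $Q'(x,y) = a'$ then $(Q')^{-1}(x,-y) = a'$, so $(Q')^{-1}$ also represents $a'$ properly, and the inversion rule gives $\cs(a',(Q')^{-1}) = \cs(a',Q')^{-1}$. Since $Q \sim Q'$ forces $(Q')^{-1} \sim Q^{-1}$, the composite $Q\,(Q')^{-1}$ is equivalent to $Q\,Q^{-1} \sim Q_0$. I would then apply Lemma~\ref{Sm} with $Q_1 = Q$ representing $a$, with $Q_2 = (Q')^{-1}$ representing $a'$, and with the third form taken to be $Q_3 = Q_0$, a legitimate choice because $Q_0 \sim Q_1Q_2$. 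The lemma produces an integer $a_3$ properly represented by $Q_0$ with
$$ \cs(a_3,Q_0) = \cs(a,Q)\cdot\cs(a',(Q')^{-1}) = \cs(a,Q)\cdot\cs(a',Q')^{-1}. $$

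By the definition of $\cR$ the left-hand side belongs to $\cR$, whence $\cs(a,Q)\,\cs(a',Q')^{-1}\in\cR$, which is exactly the asserted congruence modulo $\cR$. The only bookkeeping to watch is that every representation remain proper, since $\cs$ is defined only for proper representations: this holds for $(Q')^{-1}$ by the sign change above, and it holds for $a_3$ because the representation supplied by Lemma~\ref{Sm} is proper by construction. I do not expect a genuine obstacle here; the one step that requires insight is deciding to compose $Q$ against $(Q')^{-1}$ and to evaluate the result on the principal form, so that the subgroup $\cR$ enters on its own.
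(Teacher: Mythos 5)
Your proof is correct and follows essentially the same route as the paper's: both arguments compose the given form against the inverse of the other, apply the multiplicativity of Lemma~\ref{Sm}, and observe that the resulting $\cs$-value on the principal form lies in $\cR$ by definition. The only (negligible) difference is organizational: the paper first reduces to a single form representing both integers and then treats the coprime and non-coprime cases by hand, whereas you invoke Lemma~\ref{Sm} directly with $Q_1 = Q$, $Q_2 = (Q')^{-1}$, $Q_3 = Q_0$, letting the lemma absorb that bookkeeping.
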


\begin{proof}
Since equivalent forms represent the same integers it is sufficient to 
show that if a form $Q$ properly represents numbers $a$ and $b$, then 
$\cs(a,Q) \equiv \cs(b,Q) \bmod \cR$.

Assume that $Q = (A,B,C)$, and set $\cs(a,Q) = r$ and $\cs(b,Q) = s$.
If $a$ and $b$ are coprime, then $\cs(ab,Q_0) = r \cdot s^{-1} \in \cR$, 
where $Q_0$ is the composition of $Q$ and $Q^{-1}$. This implies the claim.

If $a$ and $b$ have a factor in common, then there is an integer $c$ 
such that $n=ab/c^2$ is represented by $Q_0$ in such a way that 
$\cs(n,Q_0) = r \cdot s^{-1} \in \cR$, and the claim follows as above.
\end{proof}

These propositions show that $\cs$ induces a homomorphism
$$ \cs: \Cl(\Delta) \lra \Q^\times/\cR $$
from the class group $\Cl(\Delta)$ to $\Q^\times/\cR$, which we
will also denote by $\cs$, and which will be called the \Sim\ map.

\begin{thm}
Let $\Delta < 0$ be a fundamental discriminant. Then the \Sim\ map is 
an injective homomorphism of abelian groups.
\end{thm}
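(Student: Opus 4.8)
The homomorphism property, together with the fact that both groups are abelian, has already been established by Lemma \ref{Sm} and the two preceding propositions; so the only thing left to prove is injectivity, i.e.\ that the kernel of $\cs$ is trivial. The plan is to exhibit an explicit left inverse of $\cs$ built from the ideal class group, and to read off injectivity from the fact that this composite is an automorphism of $\Cl(\Delta)$.

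Since $\Delta$ is fundamental, $\cO$ is the maximal order of $\Q(\sqrt{\Delta}\,)$ and $\Cl(\Delta)$ is its ideal class group. Recall that in defining $\cs$ we fixed, for every prime $p$ with $\left(\frac{\Delta}{p}\right)\ge 0$, a prime ideal $\fp$ above $p$ (corresponding to $(p,B,C)$ with $0\le B\le p$), the conjugate $\overline{\fp}$ being recorded by the formal inverse $\fp^{-1}=\fa(Q')$. First I would define a homomorphism
$$ \psi\colon \Q^\times \lra \Cl(\Delta), \qquad -1 \too 1,\quad p \too [\fp], $$
extended multiplicatively, where $[\fp]$ is taken to be the trivial class for inert primes (these never divide a primitively represented value, and in any case $(p)$ is then principal). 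This is well defined because $\Cl(\Delta)$ is a finite abelian group and the only relation among the chosen generators of $\Q^\times$ is $(-1)^2=1$.

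Next I would check that $\psi$ annihilates $\cR$. By the ideal-theoretic interpretation given above, an element $\cs(m,Q_0)\in\cR$ records the prime factorization of an ideal $(\alpha)\,\fa(Q_0)^{-1}=(\alpha)$, which is principal; hence $\psi(\cs(m,Q_0))=[(\alpha)]=1$, and $\psi$ descends to $\overline{\psi}\colon \Q^\times/\cR\to\Cl(\Delta)$. Finally I would compute $\overline{\psi}\circ\cs$: for a class $[Q]$ and any value $a=Q(x,y)$, the element $\cs(a,Q)$ records the prime factorization of the ideal $\mathfrak b=(\alpha)\,\fa(Q)^{-1}$ of norm $a$, so that $\overline{\psi}(\cs(a,Q))=[\mathfrak b]=[\fa(Q)]^{-1}$. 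Under the forms--ideals dictionary $[Q]\leftrightarrow[\fa(Q)]$ this shows that $\overline{\psi}\circ\cs$ is inversion on $\Cl(\Delta)$ (by reorienting $\psi$ one may just as well arrange it to be the identity). In either case it is a bijection, and therefore $\cs$ is injective.

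The main obstacle is not the group theory but the careful matching of the sign conventions built into $\cs$ with the arithmetic of ideals: one must verify that the formal symbol $\fp^{-1}=\fa(Q')$ used to record a negative exponent really corresponds to $[\overline{\fp}]=[\fp]^{-1}$ in $\Cl(\Delta)$, and that ramified primes $p\mid\Delta$, where $\fp=\overline{\fp}$ is ambiguous and the sign $s_j$ is immaterial, contribute classes of order dividing $2$ consistently on both sides. Once this bookkeeping of ideal norms---already implicit in the discussion preceding Lemma \ref{Sm}---is in place, the identity $\overline{\psi}\circ\cs=\mathrm{inv}$ follows directly and yields injectivity.
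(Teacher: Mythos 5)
Your proof is essentially correct, but it takes a genuinely different route from the paper. The paper stays entirely inside the language of forms: given a class $[Q]$ in the kernel, so that $a = \cs(Q) \in \cR$, it picks a form $(A,B,C) \sim Q_0$ with $\cs(A,Q_0) = a$, composes to get $Q_1 = Q \cdot (A,-B,C) \sim Q$ with $\cs(Q_1) = 1$; since $\cs(a',Q_1)=1$ forces $a'=1$, the form $Q_1$ represents $1$ and is therefore principal by the classical theory --- a two-line argument whose only external input is ``a form representing $1$ is equivalent to the principal form.'' You instead build an explicit retraction $\overline{\psi}\colon \Q^\times/\cR \to \Cl(\Delta)$ out of the ideal dictionary, sending $p \mapsto [\fp]$, and verify $\overline{\psi}\circ\cs = \mathrm{inv}$; injectivity then falls out since inversion is an automorphism. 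This is a valid and more structural argument: it identifies $\cs$ (up to inversion) as the inverse of the classical forms--ideals correspondence and in effect also pins down the image of $\cs$, not just its kernel. What it costs is self-containedness: the entire weight of your proof rests on the ideal-theoretic interpretation of $\cs$ --- that $\cs(a,Q) = p_1^{a_1}\cdots p_r^{a_r}$ is equivalent to $(\alpha) = \fp_1^{a_1}\cdots\fp_r^{a_r}\fa(Q)$ with negative exponents meaning conjugate ideals --- which the paper states only informally, without proof, as background. The sign bookkeeping you flag as ``the main obstacle'' and defer is precisely the content of that unproven dictionary, so your argument is complete only modulo that verification, whereas the paper's proof needs nothing beyond its own Lemma \ref{Sm} and classical form theory. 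Your small side observations (inert primes never divide primitively represented values; ramified classes have order dividing $2$; the fundamentality of $\Delta$ is what makes the maximal-order ideal theory apply) are all correct.
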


\begin{proof}
We have to show that $\cs$ is injective. To this end, let 
$[Q]$ denote a class with $a = \cs(Q) \in \cR$. Then there
is a form $Q_0' = (A,B,C) \sim Q_0$ with $\cs(A,Q_0) = a$.
But then $Q_1 = Q \cdot (A,-B,C)$ is a form equivalent to $Q$ 
with $\cs(Q_1) = 1$. This in turn implies that $Q_1$ represents
$1$, hence is equivalent to the principal form by the classical
theory of binary quadratic forms.
\end{proof}

\Sim's idea is to use a set of small prime numbers $S = \{p_1, \ldots, p_r\}$
which are smaller than $\sqrt{-\Delta/3}$ (and a subset of these if 
$|\Delta|$ is large), find integers $a_j$ primitively represented by $Q$
whose prime factors are all in $S$, and using linear combinations to 
find a relation in $\cR$, which gives him an integer $h$ such that
$Q^h \sim 1$. It is then easy to determine the exact order of $Q$.

\subsection*{\Sim's Language}

\Sim\ denotes binary quadratic forms $Ax^2 + Bxy + Cy^2$ by $(A,B,C)$
and considers forms with even as well as with odd middle coefficients.
The principal form with discriminant $\Delta$ is called an 
end form\footnote{Computing the powers of a form $Q$, one finds
$Q$, $Q^2$, \ldots, $Q^h \sim Q_0$ before everything repeats. The last
form in such a ``period'' of reduced forms is thus always the principal 
form.} (Endform, Schlussform), and ambiguous\footnote{The word
ambiguous was coined by Poullet-Deslisle in the French translation of
Gauss's Disquisitiones Arithmeticae; it became popular after Kummer
had used it in his work on higher reciprocity laws. \Sim\ knew 
Legendre's ``diviseurs quadratiques bifides'' as well as Gauss's 
``forma anceps''.} forms are called middle forms (Mittelformen). 

The subgroup generated by a form $Q$ is called its period, the exponent 
of a form $Q$ in the class group is called the length of its period.
\Sim\ represents a form $f = (A,B,C)$ by a small prime number $p$
represented by $f$; the powers $f1 = f$, $f2$, $f3$ of $f$ then 
represent $p$, $p^2$, $p^3$ etc., and the exponent $m$ of the $m$-th 
power $fm$ is called the pointer (Zeiger\footnote{This word is apparently
borrowed from the book \cite{Ett} on combinatorial analysis by 
Andreas von Ettinghausen, professor of mathematics at the University
of Vienna. Ettinghausen used the word ``Zeiger'' (see \cite[p. 2]{Ett}) 
as the German translation of the Latin word ``index''. \Sim\ refers
to \cite{Ett} in \cite[p. 55]{Sim1}.}) of $f$. What we denote by 
$\cs(Q^m) \equiv a \bmod \cR$, \Sim\ wrote as $fm = a$. 

\Sim\ introduced this notation in \cite[Art. 10]{Sim1}; instead of
$\cs(Q) = 2$ for $Q = (2,0,c)$ he simply wrote $(2,0,d) = 2$.
He explained the general case as follows:
\begin{quote}
So ist z.B. $(180,-17,193) = \frac{3^2 \times 5}{2^2}$ weil 
$180 = 2^2 \times 3^2 \times 5$ und $-17 \equiv -1 \pmod 4$, 
$-17 \equiv 1 \pmod 6$, $-17 \equiv 3 \pmod {10}$.\footnote{Thus we 
have, for example, $(180,-17,193) = \frac{3^2 \times 5}{2^2}$ because
$180 = 2^2 \times 3^2 \times 5$ and $-17 \equiv -1 \pmod 4$, 
$-17 \equiv 1 \pmod 6$, $-17 \equiv 3 \pmod {10}$.}
\end{quote}

One of the tricks he used over and over again is the following: 
\begin{equation}\label{E11}
  (A,B,C) \sim (A, B \pm 2A, A \pm B + C) \sim (A \pm B + C, -B \mp 2A, A)
\end{equation}
shows that if $Q = (A,B,C)$ represents an integer 
$m = Q(1,-1) = A \pm B + C$, then $\cs(Q)$ can be computed from 
$Q \sim (m, \mp 2A - B, A)$. Similarly, we have
$$ (A,B,C) \sim (A \pm B + C, B \pm 2C, C). $$

\section{\Sim's Calculations}
In this section we will reconstruct a few of \Sim's calculations
of (factors of) class numbers and factorizations.

\subsection*{$\Delta = -10079$}
\Sim\ first considers a simple example (see \cite[p. 58]{Sim1}): 
he picks a discriminant $\Delta$ for which $\Delta + 1$ is divisibly 
by $2$, $3$, $5$ and $7$, namely $\Delta = -10079$. 
Consider the form $Q = (5,1,504)$ with discriminant $\Delta$. The small 
powers of $Q$ provide us with the following factorizations:
$$ \begin{array}{c|c|c}
   \rsp     n &   Q^n            &  \cs(Q^n) \\  \hline
   \rsp     1 & \sim(504,-1,5)   &  2^{-3} \cdot 3^{-2} \cdot 7^{-1} \\
   \rsp     3 & (36,17,72)       &  2^2 \cdot 3^{-2}   \\
              & \sim (72,-17,36) &  2^{-3} \cdot 3^2 
   \end{array} $$

This implies
\begin{align*}
  \cs(Q^6) & \equiv \cs(Q^3) \cs(Q^3) 
             \equiv 2^2 \cdot 3^{-2} \cdot 2^{-3} \cdot 3^2  \equiv 2^{-1}, \\
  \cs(Q^{15}) & \equiv \cs(Q^3)^3 \cs(Q^3)^2 
               \equiv 2^6 \cdot 3^{-6} \cdot 2^{-6} \cdot 3^4 \equiv 3^{-2}, \\
  \cs(Q^{32}) & \equiv \cs(Q^{-1})  \cs(Q^{-3}) \cs(Q^6)^6 \equiv 7. 
\end{align*}
Now $7 = \cs(R)$ for $R = (7,1,360)$: this is easily deduced from 
$\Delta \equiv 1 \equiv 1^2 \bmod 7$. From $R^2 \sim (49,-41,60)$
\Sim\ reads off $\cs(Q^{64}) \equiv 2^2 \cdot 3^{-1} \cdot 5$.
But then $\cs(Q^{63}) \equiv 2^2 \cdot 3^{-1}$ and therefore
$$ \cs(Q^{75}) \equiv \cs(Q^{63}) \cdot  \cs(Q^6)^2 
              \equiv 2^2 \cdot 3^{-1} \cdot 2^{-2} 
              \equiv 3 \bmod \cR. $$
This implies $\cs(Q^{150}) \equiv  \cs(Q^{15})$ and therefore
$\cs(Q^{135}) \equiv 1 \bmod \cR$. Since neither $Q^{45}$ nor $Q^{27}$
are principal, the class of $Q$ has order $135$.

For showing that $h(\Delta) = 135$, \Sim\ would have to determine
the pointers of all primes $p < \sqrt{-\Delta/3} \approx 100.3$. The 
fact that $h$ is odd would then also show that $\Delta$ is a prime 
number.

\subsection*{$\Delta = - 121271$}

For larger discriminants, \Sim\ suggests the following method:
\begin{quote}
Bei grossen Determinanten, oder wo die vorige Methode nicht zum
Ziele f\"uhrt, nimmt man die Zeiger einiger kleiner Primzahlen
als unbekannt an, scheidet dann jene Gr\"ossen  aus den Producten
der Bestimmungsgleichungen aus, und sucht die anderen Primzahlen
in Bestimmungsgleichungen durch jene unbekannten Zeiger 
dar\-zu\-stel\-len.\footnote{For large determinants, or in cases 
where the preceding method is not successful, we take the indices
of some small primes as unknowns, eliminates those numbers from 
the products of the determination equations, and seeks to represent
these unknown indices by the other primes in these determination
equations.}
\end{quote}

\Sim\ chooses the discriminant $\Delta = -121271$; in the course of
the calculation it becomes clear that $\Delta = 99^2 - 2^{17}$, and
quite likely the discriminant was constructed in this way. This
is supported by \Sim's remark on \cite[p. 64]{Sim1} that if $D = a^m - b^2$ is
a (positive) determinant and if $a$ is odd, then the exponent of
the form $(a,2b,a^{m-1})$ is divisible by $m$, as can be seen
from the ``period''
$$ (a,2b,a^{m-1}), (a^2,2b,a^{m-2}), \ldots, (a^m,2b,1). $$
Observe that this statement only holds under the additional assumption
that these forms be reduced, i.e., that $0 < 2b \le a$. Examples are
$D = 3^3 - 1 = 26$ and $h(-4 \cdot 26) = 6$, or
$D = 3^5 - 4 = 239$ and $h(-4 \cdot 239) = 15$.
A similar observation was made by Joubert \cite{Joub} just a few years
after \Sim. The connection between classes of order $n$ and solutions
of the diophantine equation $a^m - Dc^2 = b^2$ was investigated
recently in \cite{HL}.

\medskip

Let us write $Q_2 = (2, 1, 15159)$ and $Q_3 = (3,1,10106)$. Then
$Q_2^2 \sim (4,5,7581)$ and $\cs(Q_2^2) \equiv 3 \cdot 7^{-1} \cdot 19^{-2}$.
Since $\cs(Q_3) \equiv 3$, we find $\cs(Q_2^{-2} Q_3) \equiv 7 \cdot 19$.

$Q_2^3 \sim (8,13,3795)$ gives 
$\cs(Q_2^3) \equiv 3^{-1} \cdot 5 \cdot 11^{-1} \cdot 23$ 
and $\cs(Q_2^3 Q_3) \equiv 5 \cdot 11^{-1} \cdot 23$.

We can summarize \Sim's calculations as follows:

\begin{minipage}[t]{6.2cm}
$$ \begin{array}{r|c|c}
  \rsp   n  &   Q_2^n \sim & \cs(Q_2^n) \bmod \cR \\  \hline
  \rsp   2  & (4,5,7581)         &   \\
  \rsp      & (7581,-5,4 )  &  3 \cdot 7^{-1} \cdot 19^{-2} \\
  \rsp   3  & (8,13,3795)        &  \\
  \rsp      & (3795,-13,8)  & 3^{-1} \cdot 5 \cdot 11^{-1} \cdot 23 \\
  \rsp   4  & (16,29,1908)       & \\
  \rsp      & (1953,-61,16) & 3^{-2} \cdot 7^{-1} \cdot 31 \\     
  \rsp   5  & (32,29,954)        & \\
  \rsp      & (957,35,32)   &  3^{-1} \cdot 11 \cdot 29 \\
  \rsp      & (1015,-93,32) & 5^{-1} \cdot  7 \cdot 29 
\end{array} $$ \end{minipage}
\begin{minipage}[t]{6cm}
$$ \begin{array}{r|c|c}
  \rsp   n  &   Q_2^n \sim  & \cs(Q_2^n) \bmod \cR \\  \hline
  \rsp   6  & (64,29,477)        &  \\
  \rsp      & (477,-29,64) &  3^2 \cdot 53 \\
  \rsp      & (675,227,64)  & 3^{-3} \cdot 5^{-2} \\
  \rsp   7  & (128,157,285) &   \\
  \rsp      & (285,-157,128) & 3^{-1} \cdot 5 \cdot 19^{-1}  \\
  \rsp      & (483,355,128)  & 3^{-1} \cdot 7 \cdot 23^{-1}  
   \end{array} $$
\end{minipage}

Note that if $\cs(Q_2^n) \equiv 2^{-1}u$ for some odd number $u$, then 
$\cs(Q_2^{n+1}) \equiv u$. Thus $\cs(Q_2^4) \equiv 2^{-2} \cdot 3^2 \cdot 53$
implies $\cs(Q_2^6) \equiv 3^2 \cdot 53$, and in such cases we have listed
only the relation that does not involve a power of $2$.

The computation of $Q_2^7$ reveals $\Delta = 99^2 - 2^{17}$, and shows
that $\cs(Q_2^7) \equiv 2^{-8}$, which 
gives $\cs(Q_2^{15}) \equiv 1$.

Now \Sim\ continues as follows: the relations
$$ \cs(Q_2^2) \equiv  3 \cdot 7^{-1} \cdot 19^{-2} \quad \text{and} \quad
  \cs(Q_2^7) \equiv 3^{-1} \cdot 5 \cdot 19^{-1}$$ 
give 
$$ \cs(Q_2^{12}) \equiv \cs((Q_2^7)^2Q_2^{-2}) 
                \equiv 3^{-2} \cdot 5^2 \cdot 19^{-2} 
                 \cdot 3^{-1} \cdot 7 \cdot 19^{2}
               = 3^{-3} \cdot 5^2 \cdot 7. $$
Using the relations
$$ \cs(Q_2^{12} Q_3^3) \equiv 5^2 \cdot 7, \quad \text{and} \quad
   \cs(Q_2^6 Q_3^3) \equiv 5^{-2}, $$
\Sim\ deduces
\begin{equation}\label{ES7}
 \cs(Q_2^3 Q_3^6) \equiv \cs(Q_2^{18} Q_3^6) \equiv 7. 
\end{equation}

This allows him to eliminate the $7$s from his relations, which gives
\begin{align*}
 \cs(Q_2^{-4} Q_3^7) & \equiv \cs(Q_2^{-7}) \cs(Q_3) \cs(Q_2^3 Q_3^6) \equiv 23, \\
 \cs(Q_2^7 Q_3^8)   & \equiv \cs(Q_2^4) \cs(Q_3^2) \cs(Q_2^3 Q_3^6) \equiv 31.
\end{align*}
For the actual computation of the order of $Q_3$, only the relation
(\ref{ES7}) will be needed.

\Sim\ also investigates the powers of $Q_3$ and finds

\begin{minipage}[t]{6.2cm}
$$ \begin{array}{r|c|c}
  \rsp   n  &   Q_3^n \sim  & \cs(Q_3^n) \bmod \cR \\  \hline
  \rsp   1  & (3,1,10106)        &  \\
  \rsp      & (10108,2,3)   & 2^2 \cdot 7 \cdot 19^2 \\
  \rsp   3  & (27,43,1140)       &  \\ 
            & (1210,-97,27) & 2^{-1} \cdot 5 \cdot 11^{-2} \\
  \rsp      & (1162, 65,27) & 2 \cdot 7^{-1} \cdot 83 \\
  \rsp   4  & (81, 43, 380) &   \\
  \rsp      & (380,-43,81) &  2^2 \cdot 5^{-1} \cdot 19^{-1} \\
  \rsp      & (418,119,81) & 2^{-1} \cdot 11 \cdot 19 
  \end{array} $$ \end{minipage}
\begin{minipage}[t]{6cm}
$$ \begin{array}{r|c|c}
  \rsp   n  &   Q_3^n \sim  & \cs(Q_3^n) \bmod \cR \\  \hline
  \rsp   5  & (243,205,168) &  \\
  \rsp      & (616,541,168) & 2^3 \cdot 7^{-1} \cdot 11^{-1} \\
  \rsp   6  & (729,205, 56) & \\
  \rsp      & (56,-205,729) & 2^{-3} \cdot 7 
   \end{array} $$
\end{minipage}
\smallskip

\Sim\ observes
$$ \cs(Q_2^2 Q_3^9) \equiv \cs(Q_3^3) \cs(Q_2^{-1})  \cs(Q_2^3 Q_3^6) \equiv 83, $$
but does not use this relation in the sequel. He continues with
$$ \cs(Q_2 Q_3^4) \equiv 11 \cdot 19, \quad 
  \cs(Q_2^3 Q_3^{-5}) \equiv 7 \cdot 11, $$
from which he derives the following relations:
\begin{align*}
  \cs(Q_3^{-11})  & \equiv \cs(Q_2^3 Q_3^{-5})  \cs(Q_2^{-3} Q_3^{-6}) \equiv 11, &
  \cs(Q_2 Q_3^{15})   & \equiv \cs(Q_2 Q_3^4)\cs(Q_3^{11}) \equiv 19, \\
  \cs(Q_2^8 Q_3^{16}  & \equiv \cs(Q_2^7) \cs(Q_3)  \cs(Q_2 Q_3^{15}) \equiv 5, &
  \cs(Q_2^{22} Q_3^{35}) & \equiv \cs(Q_2^{16} Q_3^{32})  \cs(Q_2^6Q_3^3) \equiv 1.
\end{align*}
Raising the last relation to the $15$th power yields $\cs(Q_3^{525}) \equiv 1$. 
Checking that $Q_3^{75}$, $Q_3^{105}$ and $Q_3^{175}$ are not principal then 
shows that $Q_3$ has order $h = 525 = 3 \cdot 5^2 \cdot 7$. In fact, 
{\tt pari} tells us that this is the class number of $\Delta = -121271$.

\section{Class Number Calculations}
Let us remark first that \Sim\ does not compute class numbers
but rather the order of a given form in the class group. Note that 
this is sufficient for factoring the discriminant. \Sim\ is well
aware of the fact that his method only produces divisors of the 
class number: in \cite[art. 13]{Sim1}, he writes
\begin{quote}
Was die L\"ange $\theta$ anbelangt, sucht man $fm = 1$ zu erhalten, 
wo dann entweder $\theta = m$ oder ein Theiler von $m$ ist. Die
wichtigsten Glieder der Perioden sind die zu kleinen Primzahlen
geh\"origen Formen. Welches die gr\"osste Primzahl w\"are, deren 
Zeiger man kennen m\"usse, um vor Irrthum sicher zu sein, konnte
ich bis jetzt nicht ermitteln, jedenfalls ist sie kleiner als 
$\sqrt{D/3}$ bei den unpaaren, und als $2 \sqrt{D/3}$ bei den
paaren Formen, wahrscheinlich aber reichen dazu nur wenige 
Primzahlen hin.\footnote{As for the length $\theta$ of the period,
one tries to find $fm = 1$, and then either $\theta = m$, or $\theta$
is a divisor of $m$. The most important members of the period are 
those belonging to small prime numbers. I have not yet found what 
the smallest prime number is whose pointer must be known in order not
to commit an error; in any case it is smaller than $\sqrt{D/3}$ for
odd forms, and than  $2 \sqrt{D/3}$ for the even forms, but most
likely just a few prime numbers are sufficient.}
\end{quote}

In the example $\Delta = -121271$ above we have seen that the powers
of $Q_2$ only give a subgroup of order $15$ in the class group, whereas
the powers of $3$ include all forms representing the primes
$$ p = 2, 3, 5, 7, 11, 19, 23, 29, 31, 53, 83. $$
For verifying that $h(-121271) = 525$, one would have to find the 
pointers for the other primes $p$ with $(\Delta/p) = +1$ and 
$\Delta < 202$ as well, namely those of
$$ p = 47, 61, 73, 79, 89, \ldots, 197. $$
Since the pointers of all small primes are known, this is only a 
little additional work. The fact that the class number is odd then
implies that $-\Delta = 121271$ is a prime.

\subsection*{$\Delta = - 4 \cdot 265371653$}

Consider the forms 
$$ Q_3 = (3,2, 88457218), \quad Q_{11} = (11,10,24124698), \quad \text{and} \quad
   Q_{13} = (13,10, 20413206). $$
Using a computer it is easily checked that $Q_3 \sim Q_{11}^{5} Q_{13}^{-3}$,
but this relation was apparently not noticed by \Sim. It would follow
easily from 
\begin{align*}
 Q = Q_{11}^5 & = (6591, -6568, 41899),  & Q(0,1) & = 11 \cdot 13 \cdot 293, \\
 Q = Q_{13}^3 & = (2197, -2174, 121326), & 
     Q(1,-1) & = 3 \cdot 11 \cdot 13 \cdot 293, 
\end{align*}
but perhaps the prime $293$ was not an element of \Sim's factor base.

A computer also finds the following relations among the small powers
of these three forms:
\begin{align*}
  Q_{11}^{13} Q_{13}^{11}  & = (1058, 918, 251023);
                 & \cs( Q_{11}^{13} Q_{13}^{11} ) & \equiv 2 \cdot 23^{-2}, \\
 Q_3^{14} Q_{11}^{12} Q_{13} & = (529, -140, 501657);
                 & \cs( Q_3^{14} Q_{11}^{12} Q_{13} ) & \equiv 23^{-2}.  
\end{align*}
Composition shows that
\begin{align*}
 Q_3^{-14} Q_{11} Q_{13}^{10} & \equiv
   Q_{11}^{13} Q_{13}^{11} Q_3^{-14} Q_{11}^{-12} Q_{13}^{-1}  \\
  & = (1058, 918, 251023) (529, 140, 501657) = 
   (2, 918, 132791167), 
\end{align*}
and squaring yields   
 $$  Q_3^{-28} Q_{11}^2 Q_{13}^{20} \sim Q_0. $$
Similarly,
\begin{align*}
 Q_3^3 Q_{11}^{15} Q_{13}^{11} & = (16389, -16010, 20102), & 
   \cs(Q_3^3 Q_{11}^{15} Q_{13}^{11}) & \equiv 2 \cdot 19 \cdot 23^2, \\
 Q_3^{12} Q_{11}^{15} Q_{13}^8 & = (6859, 5028, 39611), & 
   \cs(Q_3^{12} Q_{11}^{15} Q_{13}^8) & \equiv 19^3, \\
\intertext{which implies}
 Q_3^{3} Q_{11}^{15} Q_{13}^{11} \cdot Q_{11}^{13} Q_{13}^{11}
  & \sim (19, 12, 13966931), & \cs(Q_3^{3} Q_{11}^{28} Q_{13}^{22}) & \equiv 19, 
\end{align*}
and so
$$ 1 \equiv \cs(Q_3^{3} Q_{11}^{28} Q_{13}^{22})^{3}/\cs(Q_3^{12} Q_{11}^{15} Q_{13}^8)
   \equiv \cs(Q_3^{-3} Q_{11}^{69} Q_{13}^{58}). $$

Eliminating $Q_3 \sim Q_{11}^{5} Q_{13}^{-3}$ from the relations
$$ Q_3^{-28} Q_{11}^2 Q_{13}^{20} \sim Q_3^{-3} Q_{11}^{69} Q_{13}^{58}  \sim Q_0 $$
then implies
$$ Q_{11}^{-138} Q_{13}^{104} \sim Q_0 \quad \text{and} \quad 
   Q_{11}^{54}  Q_{13}^{67} \sim Q_0,   $$ 
hence
$$ Q_{11}^{14862} \sim Q_0. $$
It is then easily checked that $Q_3$ and $Q_{11}$ have exponent $14862$ in
the class group, whereas $Q_{13}$ is a sixth power and has order
$2477$. A quick calculation with {\tt pari} reveals that $h(\Delta) = 14862$. 

\Sim\ must have proceeded differently, as he records the relations
$$ Q_3^{119} Q_{11}^{11} Q_{13}^8 \sim Q_0, \quad
   Q_3^{1276} Q_{11}^{94} Q_{13}^{26} \sim Q_0, \quad
   Q_3^{385} Q_{11}^{31} Q_{13}^4 \sim Q_0. $$
It is not impossible that by playing around with small powers of 
$Q_3$, $Q_{11}$ and $Q_{13}$, \Sim's calculations can be reconstructed.
It is more difficult to reconstruct \Sim's factorization of 
$N = \frac19(10^{17}-1)$, since he left no intermediate results at all
(apparently he was forced to shorten his manuscript drastically before
publication).

\Sim\ knew that it is often not necessary to determine the class number 
for factoring integers; in \cite[Art. 17]{Sim1} he observed:
\begin{quote}
Bei Zahlenzerlegungen nach dieser Methode findet man oft $f 2a = m^2$,
oder es l\"asst sich aus den Bestimmungsgleichungen eine solche Form
ableiten; dann hat man $\frac{f 2a}{m^2} = (\frac{fa}m)^2 = 1$, und
es kann $fa:m$ blos eine Schluss- oder Mittelform sein. Gew\"ohnlich
ist das letztere der Fall. \footnote{In factorizations with this method
one often finds $fa = m^2$, or such a form can be derived from certain
determination equations; then we have $\frac{f 2a}{m^2} = (\frac{fa}m)^2 = 1$,
and $fa:m$ can only be an end or a middle form. Most often, the latter 
possibility occurs.}
\end{quote}

To illustrate this idea we present an example that cannot be found in 
\Sim's article. Let $\Delta =  -32137459$ and consider the form
$Q = (5,1, 1606873)$ with discriminant $\Delta$. It is quickly seen
that $Q^{26}(1,0) = 11^2$. This observation immediately leads to a 
factorization of $\Delta$: the form $Q^{26}$ represents $11^2$, 
hence $Q^{13}$  represents $11$, as does $Q_{11} = (11, 3, 730397)$. 
Thus $(Q^{13}R^{-1})^2$ represents $1$, which implies that $Q^{13}R^{-1}$ 
is ambiguous (see \cite[S. 36]{Sim1}). In fact, 
$Q^{13}R^{-1} = (1511, 1511, 5695)$, which gives the factorization
$\Delta = - 1511 \cdot 21269$.

\section{Shanks}

The factorization method based on the class group of binary quadratic 
forms was rediscovered by Shanks \cite{Sha}, who, however, used a 
completely different method for computing the class group: he estimated 
the class number $h$ using truncated Dirichlet L-series and the found 
the correct value of $h$ with his baby step -- giant step method. 
Attempts of speeding up the algorithm led, within just a few years, 
to Shanks's discovery of the infrastructure and his square form 
factorization method SQUFOF.

The factorization method described by \Sim\ was rediscovered by
Schnorr \cite{Schn};  the \Sim\ map is defined in \cite[Lemma 4]{Schn} 
(see also \cite[Thm. 3.1]{Sey}), although in a slightly different guise: 
a quadratic form $Q = (a,b,c)$ is factored into ``prime forms''
$I_p = (p,b_p,C)$, where $B = b_p$ is the smallest positive solution of
$B^2 \equiv \Delta \bmod 4p$  for $\Delta = -N \equiv 1 \bmod 4$.
Thus the equation corresponding to our
$$ \cs(Q) = \prod_{i=1}^n p_i^{\pm e_i} \quad \text{looks like} \quad 
   Q = \prod_{i=1}^n (I_p)^{\pm e_i} $$
in \cite{Sey}, ``where the plus sign in the exponent $e_i$ holds if 
and only if $b \equiv b_{p_i} \bmod 2p_i$. Variations of this method
were later introduced by Mc Curley and Atkin.

\Sim's method is superior to Schnorr's for calculations by hand since 
it allows him to use the factorizations of $Q(0,1)$ and $Q(1,\pm 1)$.
The main difference between the two methods is that \Sim\ factors the
forms $Q_p^n$ for small prime numbers $p$ and small exponents $n$,
 whereas Schnorr factors products $Q_1^{n_1} \cdots Q_r^{n_r}$ of forms
$Q_j = (p_j,*,*)$ for primes in his factor based and exponent vectors
$(n_1,\ldots, n_r)$ chosen at random.

\Sim's question in Section 4 concerning the number of primes $p$ 
such that the forms $(p,B,C)$ generate the class group was answered
under the assumption of the Extended Riemann Hypothesis by Schoof
\cite[Cor. 6.2]{Schoof}, who showed that the first $c \log^2|\Delta|$ 
prime numbers suffice; Bach \cite{Bach} showed that, for fundamental
discriminants $\Delta$, we can take $c = 6$.

The basic idea of combining relations, which is also used in factorization
methods based on continued fractions, quadratic sieves or the number
field sieve, is not due to \Sim\ but rather occurs already in the work
of Fermat and played a role in his challenge to the English mathematicians,
notably Wallis and Brouncker. In this challenge, Fermat explained that
if one adds to the cube $343 = 7^3$ all its proper divisors, then the
sum $1 + 7 + 7^2 + 7^3 = 400 = 20^2$ is a square, and asked for another
cube with this property. 

Fermat's solution is best explained by studying a simpler problem
first, namely that of finding a number $n$ with $\sigma(n^2) = m^2$, 
where $\sigma(n) = \sum_{d \mid n} 1$ is the sum of all divisors of a 
number. Making a table of $\sigma(p)$ for small prime powers $p$ one 
observes that $\sigma(2^4) = \sigma(5^2) = 31$, hence $\sigma(20^2) = 31^2$.

The solution\footnote{Sufficiently many hints can be found in Frenicle's 
letter in \cite[XXXI]{Wallis}, and in subsequent letters by Wallis and 
Schooten. See also the detailed exposition given by Hofmann \cite{Hof}.} 
of Fermat's challenge also exploits the multiplicativity of $\sigma(n)$: 
with little effort one prepares a table for the 
values of $\sigma(p)$ for  small primes $p$ such as the following:

$$ \begin{array}{r|c} 
    p  & \sigma(p^3) \\ \hline 
    2  & 3 \cdot 5 \\
    3  & 2^3 \cdot 5 \\
    5  & 2^2 \cdot 3 \cdot 13 \\
    7  & 2^4 \cdot 5^2 \\
   11  & 2^3 \cdot 3 \cdot 61 
  \end{array} \qquad \qquad
  \begin{array}{r|c} 
    p  & \sigma(p^3) \\ \hline 
   13  & 2^2 \cdot 5 \cdot 7 \cdot 17 \\
   17  & 2^2 \cdot 3^2 \cdot 5 \cdot 29 \\
   19  & 2^3 \cdot 5 \cdot 181 \\
   23  & 2^4 \cdot 3 \cdot 5 \cdot 53 \\
   29  & 2^2 \cdot 3 \cdot 5 \cdot 421 
  \end{array} \qquad \qquad
  \begin{array}{r|c} 
    p  & \sigma(p^3) \\ \hline 
   31  & 2^6 \cdot 13 \cdot 37 \\
   37  & 2^2 \cdot 5 \cdot 19 \cdot 137 \\
   41  & 2^2 \cdot 3 \cdot 7 \cdot 29^2 \\
   43  & 2^3 \cdot 5^2 \cdot 11 \cdot 37 \\
   47  & 2^5 \cdot 3 \cdot 5 \cdot 13 \cdot 17 
   \end{array} $$

\smallskip \noindent
Then it is readily seen that 
$n = 751530 = 2 \cdot 3 \cdot 5 \cdot 13 \cdot 41 \cdot 47$.

\section*{Concluding Remarks}

\Sim's contributions to the theory of quadratic forms and the
factorization of numbers would have remained unknown if his articles
could not be found online. In particular, his memoirs \cite{Sim1,Sim2,Sim59}
can be accessed via google books\footnote{See {\tt http://books.google.com}}, 
and the articles that appeared in the journal \v{C}asopis are available
on the website of the GDZ\footnote{see 
{\tt http://gdz.sub.uni-goettingen.de/dms/load/toc/?PPN=PPN31311028X}}
in G\"ottingen. I would also like to remark that a prerequisite for 
understanding the importance of \cite{Sim1} is a basic familiarity 
with composition of binary quadratic forms.

I do not know where \Sim\ acquired his knowledge of number theory. 
\Sim\ was familiar with Legendre's ``Essais de Th\'eorie des Nombres''
and Gauss's ``Disquisitiones Arithmeticae'', as well as with publications
by Scheffler \cite{Sch} on diophantine analysis\footnote{This is an
interesting book, which contains not only the basic arithmetic of
the integers up to quadratic reciprocity, but also topics such as
continued fractions in Gaussian integers, which are discussed using
geometric diagrams, and the quadratic reciprocity law in $\Z[i]$.}, 
and by Dirichlet \cite{Dir} and Lipschitz \cite{Lip} 
on the class number of forms with  nonsquare discriminants. Since 
Lipschitz's article appeared in 1857, \Sim\ must have had access to 
Crelle's Journal while he was teaching in Budweis.

\Sim's article \cite{Sim1} contains other ideas that we have not discussed.
In particular, in \cite[Art. 12]{Sim1} he tries to get to grips with 
decompositions of noncyclic class groups into ``periods'' (cyclic
subgroups); in this connection he gives the example $\Delta = -2184499$
with class group of type\footnote{\Sim\ remarks that this is a ``remarkably
rare case''. In fact, the smallest discriminant with a noncyclic 
$5$-class group is $\Delta = -11199$, and the minimal $m$ with 
$\Delta = -4m$ and noncyclic $5$-class group is $m = 4486$.}  $(5,5,11)$. 
In \cite[Art. 18]{Sim1}, \Sim\ solves diophantine equations of the form 
$pz^m = ax^2 + bxy +cy^2$.

\bigskip

\end{document}